\documentclass[11pt]{article}
\usepackage{geometry}
\usepackage[latin1]{inputenc}
\usepackage[italian,english]{babel}
\usepackage{amsmath}
\usepackage{amsfonts}
\usepackage{amsthm}
\usepackage{paralist, times, indentfirst}
\usepackage[T1]{fontenc}
\usepackage{fancyhdr}
\usepackage{hyperref}
\geometry{a4paper}

\setcounter{secnumdepth}{0}

\newcommand{\R}{\mathbb{R}}

\DeclareMathOperator{\sgn}{sgn}
\DeclareMathOperator{\esssup}{ess \ sup}
\newcommand{\ab}{(Amar and Bellettini, 1994)}
\newcommand{\abt}{(Alvino et al., 1990)}
\newcommand{\aftl}{(Alvino et al., 1997)}
\newcommand{\atI}{(Alvino and Trombetti, 1979)}
\newcommand{\atII}{(Alvino and Trombetti, 1978)}
\newcommand{\atl}{(Alvino et al., 1990)}
\newcommand{\bfm}{(Betta et al.,1994)}
\newcommand{\bm}{(Betta and Mercaldo, 1991)}
\newcommand{\dgI}{(Della Pietra and Gavitone)}
\newcommand{\dgII}{(Della Pietra and Gavitone, 2013)}

\newcommand{\fp}{(Ferone and Posteraro, 1992)}
\newcommand{\hlp}{(Hardy et al., 1964)}
\newcommand{\oc}{(Rockafellar, 1970)}
\newcommand{\tI}{(Talenti, 1976)}
\newcommand{\tII}{(Talenti, 1985)}

\newtheorem{lem}{Lemma}
\newtheorem{thm}{Theorem}

\theoremstyle{remark}

\newenvironment{sistema}%
{\left\lbrace\begin{array}{@{}l@{}}}%
{\end{array}\right.}

\thispagestyle{fancy}
\lhead{This is an author-created, un-copyedited version of an article published in Rendiconto dell'Accademia delle Scienze Fisiche e Matematiche. Liguori Editore is not responsible for any errors or omissions in this version of the manuscript or any version derived from it. The Version of Record is available online at \url{http://www.liguori.it/periodici.asp?isbn=6662}}
\cfoot{1}

\title{Convex Symmetrization for Anisotropic Elliptic Equations with a lower order term}
\textwidth = 125 mm
\textheight = 190 mm
\topmargin = -13 mm

\begin{document}
\vspace*{45 mm}
\begin{center}
{\bf \fontsize{13}{13}\selectfont 
Convex Symmetrization for Anisotropic Elliptic Equations with a lower order term}\\
Nota di Gianpaolo Piscitelli \footnote{Dipartimento di Matematica e Applicazioni \lq\lq R. Caccioppoli\rq\rq, Universit\`a di Napoli \lq\lq Federico II\rq\rq, Via Cintia, 80126 Napoli, Italy. $\mathtt{ gianpaolo.piscitelli@unina.it}$}\\
\vspace{11pt}
{\fontsize{10}{10}\selectfont Presentata dal socio Vincenzo Ferone\\
(Adunanza del 21 Novembre)}
\end{center}
\vspace{11pt}
{\fontsize{10}{10}\selectfont \textit{Keywords:} anisotropic elliptic equations, rearrangements, convex symmetrization}\\

{\fontsize{10}{10}\selectfont \noindent \textbf{Abstract} - We use \lq\lq generalized\rq\rq\ version of total variation, coarea formulas, isoperimetric inequalities to obtain sharp estimates for solutions (and for their gradients) to anisotropic elliptic equations with a lower order term, comparing them with the solutions to the convex symmetrized ones.}\\

{\fontsize{10}{10}\selectfont \noindent \textbf{Riassunto} - In questa nota si usano le versioni \lq\lq generalizzate \rq\rq della variazione totale, delle formule di coarea e delle diseguaglianze isoperimetriche al fine di ottenere stime ottimali per le soluzioni (e per i loro gradienti) di equazioni ellittiche anisotrope con un termine di ordine inferiore, confrontandole  con le soluzioni di quelle simmetrizzate convesse.}
\vspace{11pt}

\section{\fontsize{10}{10}\selectfont \bf 1 -  INTRODUCTION}
\thispagestyle{fancy}
To reduce the complexity of a well defined class of problems, sometimes is possible to estimate the solutions by those of the corresponding symmetrized problem. \\

\medskip
In this way, we can deduce some information about the solutions to the generic problem using the solutions of the symmetrized one. So it is very significant to define an appropriate symmetrization.\\
By means of Schwarz (or spherical) symmetrization, it is possible to obtain comparison results for solutions to linear elliptic problems:
\begin{equation}
\label{semell}
-\text{div}( A(x)\cdot \nabla u) =f \quad\text{in}\ \Omega,\quad u \in H^1_0(\Omega),
\end{equation}
where $\nabla$ stands for the gradient operator, and $A(x)$ is a measurable function such that
\begin{equation}
\langle A(x)\cdot\xi,\ \xi\rangle \geq |\xi|^2, \quad \forall \xi\in\R^n.
\end{equation}
If $u$ is a solution to (\ref{semell}), then $u^*(x)\leq v(x)$, where $v$ solves 
\begin{equation}
-\Delta v =f^* \quad\text{in}\ \Omega^*,\quad v \in H^1_0(\Omega^*),
\end{equation}
where $f^*$ is the symmetrized function of $f$ and $\Omega^*$ is the ball centered in the origin such that $|\Omega|=|\Omega^*|$. For example, in \atl  and in \tII, we find comparison results to elliptic operators of general form, that is with first and zero order terms, with different constraints on the coefficients of lower order terms. Further results can be found in \abt,\atI, \bm, \fp, for linear cases and \bfm, \bm for non linear cases.
In \aftl, section 4, we find a comparison result for solutions to problem
\begin{equation}
\label{seanel}
 -\text{div} (a(x,u ,\nabla u))=f \quad \text{in} \ \Omega,\quad u \in H^1_0(\Omega)
\end{equation}
where
\begin{equation}
\label{anelco}
\langle a(x,\eta,\xi),\ \xi\rangle \geq H(\xi)^2 \quad \text{a.e.}\quad x \in\Omega ,\quad\eta\in\R ,\quad\xi\in\R^n,
\end{equation}
with $H$ homogeneous convex function. The authors, using convex symmetrization, estimate a solution of (\ref{seanel}) in terms of a function $v$ that solves
\begin{equation}
-\Delta v =f^* \quad\text{in}\ \Omega^*,\quad v \in H^1_0(\Omega^*).
\end{equation}
In the present paper we consider a lower order term $b(x, \nabla u)$ for (\ref{seanel}), that is
\begin{equation}
\label{aniell}
 -\text{div} (a(x,u,\nabla u))+b(x, \nabla u)=f \quad \text{in} \ \Omega,\quad u \in H^1_0(\Omega)
\end{equation}
where $a$ satisfies the ellipticity condition (\ref{anelco}) and on $b$ we assume that
\begin{equation}
|b(x,\xi)|\leq B(x) H(\xi)
\end{equation}
where $B(x)$ is an integrable function. Also in this case we use convex symmetrization, obtaining comparison results with solutions of the convexly symmetric problem
\begin{equation}
\label{symell}
\begin{sistema}
-\text{div} (H(\nabla v)\nabla H(\nabla v)) + \tilde{b}(H_0(x)) \langle \nabla H_0(x), \nabla H( \nabla v) \rangle H(\nabla v)=f^\star \  \text{in} \ \Omega^\star\\ v \in H_0^1(\Omega^\star),
\end{sistema}
\end{equation}
where $H_0$ is polar to $H$, $\tilde{b}$ is an auxiliary function related to $B$, $f^\star$ is the convex rearrangement of $f$ with respect to $H$ and $\Omega^\star$ is the set homothetic to
\begin{equation*}
K_0:=\lbrace x \in \R^n : H_0(x)\leq 1 \rbrace.
\end{equation*}
We obtain the following estimates:
\begin{align}
\label{estim1}
u^\star & \leq v\\
\label{estim2}
\int_{\Omega}H^q(\nabla u) & \leq\int_{\Omega^\star}H^q(\nabla v)
\end{align}
In the proof we use the generalized versions of total variation, coarea formulas and isoperimetric inequalities (\emph{see} \aftl, \tII). We derive some differential inequalities for the rearrangement $u^*$ of the solution $u$ using Schwarz and Hardy inequalities and the properties of homogeneity and convexity of the function $H$. Finally we consider the case where $\tilde{b}$ is essentially bounded by a constant $\beta$; we can compare solutions of (\ref{aniell}) with solutions to
\begin{equation}
\begin{sistema}
-\text{div} (H(\nabla v)\nabla H(\nabla v)) - \beta\langle \nabla H_0(x), \nabla H( \nabla v) \rangle H(\nabla v)=f^\star \  \text{in} \ \Omega^\star\\ v \in H_0^1(\Omega^\star)
\end{sistema}
\end{equation}
and we obtain the same estimates (\ref{estim1}) and (\ref{estim2}) of the preceding case.
We refer to \dgI and \dgII for similar results under different assumptions on $b(x,\xi)$.
\vspace{11pt}

\section{\fontsize{10}{10}\selectfont \bf 2 - PRELIMINARIES}
\subsection{\fontsize{10}{10}\selectfont \bf 2.1 - REARRANGEMENTS}
Let $\Omega$ be a measurable and not negligible subset of $n$-dimensional euclidean space $\mathbb{R}^n$, let $u$ be a measurable map from $\Omega$ into $\mathbb{R}$. We define (\emph{see} also \tI):\\
the \emph{distribution function} of $u$ as the map $\mu$ from $[0,\infty[$ to $[0, \infty[$  such that\\ \mbox{$\mu (t) := |\{x \in \Omega : |u(x)|>t \}|$};\\
the \emph{decreasing rearrangement} of $u$, denoted by $u^*$, as the  map from $[0,\infty[$ to $[0, \infty[$  such that \mbox{$u^* (s) :=\sup\{t>0 : \mu(t)>s \}$};\\ 
the \emph{sferically decreasing rearrangement} of $u$, denoted by $u^\#$, as the  map from $[0,\infty[$ to $[0, \infty[$  such that \mbox{$u^\#(s) :=\sup\{t>0 : \mu(t)>\omega_n|x|^n \}$}.\\ 
We denote by $\Omega^\#$ the ball centered in the origin such that $|\Omega^\#|=|\Omega| $.
\vspace{11pt}

\subsection{\fontsize{10}{10}\selectfont \bf 2.2 - GAUGE}
Let $H : \mathbb{R}^n \to [0,\infty[$ be a $C^1(\R^n\backslash\{ 0\})$ convex function satisfying the homogeneity property:
\begin{equation}
\label{hompro}
H(t\xi)=|t|H(\xi), \quad \forall \xi \in \mathbb{R}^n, \quad \forall t \in \mathbb{R}.
\end{equation}
Furthermore, assume that $H$ satisfies
\begin{equation}
\label{betabs}
\alpha | \xi |\leq H(\xi)\leq\beta|\xi|, \quad \forall \xi \in \mathbb{R}^n,
\end{equation}
for some positive constants $\alpha \leq \beta$. We also assume that
\begin{equation}
K=\{x \in \mathbb{R}^n : H(x)\leq 1\}
\end{equation}
has measure $|K|$ equal to the measure $\omega_n$ of the unit sphere in $\mathbb{R}^n$.  Because of ($\ref{hompro}$), this assumption is not restrictive. Sometimes we will say that $H$ is the gauge of $K$. If one defines the \emph{support function} of $K$ as:
\begin{equation}
H_0(x)= \sup_{\xi\in K} \langle x, \xi \rangle.
\end{equation} 
Clearly $H_0(x)$ itself is a gauge of the set:
\begin{equation}
K_0= \{x\in \mathbb{R}^n: H_0(x)\leq 1\},
\end{equation} 
we denote by $\kappa_n$ the measure of $K_0$.\\
Let us observe that $\nabla H_0(x)$ is, for a.e. $x$, a vector normal to $\partial K_0(x)$. Then the definition of $H$ and $H_0$ gives (\emph{see} \oc)
\begin{equation}
\label{defprH}
H(\nabla H_0(x))=\frac{\langle \nabla H_0(x),x\rangle}{H_0(x)}\quad \text{and} \quad H_0(\nabla H(x))=\frac{\langle \nabla H(x),x\rangle}{H(x)}.
\end{equation}
The homogeneity assumption (\ref{hompro}) implies, by Euler's Theorem, that
\begin{equation}
\label{eulhho}
H(\nabla H_0(x))=1 \quad \text{and} \quad H_0(\nabla H(x))=1.
\end{equation}
It is useful to recall that by Euler Theorem we also have
\begin{equation}
\nabla H(\nabla H_0(x))=\frac{x}{H_0(x)}.
\end{equation}
We define the \emph{(decreasing) convex rearrangement} of $u$, denoted by $u^\star$, as the map such that \mbox{$u^\star(x)=u^*(\kappa_n(H_0(x))^n)$}.\\
We denote by $\Omega^\star$ the set homothetic to $K_0$ such that $|\Omega^\star|=|\Omega|$.
\vspace{11pt}

\subsection{\fontsize{10}{10}\selectfont \bf 2.3 - GENERALIZED TOTAL VARIATION, PERIMETER AND COAREA \\
 \mbox{\ \ \  FORMULA}}
It is possible to give the following definition of the total variation of a function $u \in BV(\Omega)$ with respect to a gauge function $H$ (\emph{see} \ab):
\begin{equation}
\int_\Omega |\nabla u|_H=\sup\left\{\int_\Omega u \  \text{div}   \varphi  \ dx  : \varphi \in C_0^1(\Omega; \mathbb{R}^n), H_0(\varphi)\leq 1  \right\}
\end{equation}
and the following \lq\lq generalized\rq\rq definition of perimeter of a set $E$ with respect to $H$:
\begin{equation}
P_H(E; \Omega)=\int_\Omega |\nabla \chi_E|_H=\sup\left\{\int_\Omega  \text{div}   \varphi  \ dx  : \varphi\in C_0^1(\Omega; \mathbb{R}^n), H_0(\varphi)\leq 1  \right\}
\end{equation}
These definitions yeld to the coarea formula
\begin{equation}
\label{fleris}
\int_\Omega |\nabla u|_H=\int_0^\infty P_H(\{u>s\}; \Omega )ds,
\end{equation}
and to the \lq\lq generalized\rq\rq\ isoperimetric inequality
\begin{equation}
\label{isoine}
P_H(E; \mathbb{R}^n )\geq n \kappa_n^{1/n}|E|^{1-\frac{1}{n}}.
\end{equation}
We finally observe that if $u\in W^{1,1}(\Omega)$ then
\begin{equation}
\int_\Omega |\nabla u|_H = \int_\Omega H(\nabla u),
\end{equation}
and it holds
\begin{equation}
\label{derfle}
-\frac{d}{dt}\int_{u>t} |\nabla u|_H dx =P_H(\{u>t\}; \Omega ).
\end{equation}
\vspace{11pt}

\subsection{\fontsize{10}{10}\selectfont \bf 2.4 - PRELIMINARY RESULTS}
In this section we give three Lemmas, that are basic for our treatment.
\begin{lem}
\label{mu1H2>}
If $u$ is any member of $H_0^1(\Omega)$, then 
\begin{equation}
\label{thlem1}
\frac{1}{n^2\kappa_n^\frac{2}{n}}\mu (t)^{\frac{2}{n}-2}\left[ -\mu ' (t)\right]\left[ -\frac{d}{dt} \int_{|u|>t} H^2(\nabla u) \right]\geq 1 
\end{equation}
for a.e. $t$ such that $0<t<\esssup |u|$.
\begin{proof}
For $h>0$, Schwarz inequality gives 
\begin{equation}
\frac{1}{h}\int_{t<|u| \leq t+h} H(\nabla u)\leq \frac{1}{h}\left(\int_{t<|u|\leq t+h}dx \right)^\frac{1}{2}\left(\int_{t<|u|\leq t+h} H^2(\nabla u) \right)^\frac{1}{2}
\end{equation}
and
\begin{equation}
\frac{1}{h}\int_{t<|u| \leq t+h} H(\nabla u)\leq \left(\frac{1}{h}(\mu(t)-\mu(t+h)\right)^\frac{1}{2}\left(\frac{1}{h}\int_{t<|u| \leq t+h} H^2(\nabla u) \right)^\frac{1}{2}.
\end{equation}
Therefore, as $h\rightarrow 0^+$, we obtain
\begin{equation}
\label{holder}
-\frac{d}{dt}\int_{|u|>t} H(\nabla u)\leq \left(-\mu '(t)\right)^\frac{1}{2}\left(-\frac{d}{dt}\int_{|u|>t} H^2(\nabla u) \right)^\frac{1}{2}.
\end{equation}
By (\ref{isoine}) and (\ref{derfle}), we have
\begin{equation}
n \kappa_n^{1/n}\mu(t)^{1-\frac{1}{n}}\leq \sqrt{-\mu '(t)}\left(-\frac{d}{dt}\int_{|u|>t} H^2(\nabla u) \right)^\frac{1}{2}.
\end{equation}
Then squaring and dividing by $n^2 \kappa_n^{2/n}\mu(t)^{2-\frac{2}{n}}$, we obtain (\ref{thlem1}).
\end{proof}
\end{lem}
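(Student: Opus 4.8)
The plan is to obtain (\ref{thlem1}) by combining a Cauchy--Schwarz estimate on thin super-level annuli with the generalized coarea formula of \S2.3 and the generalized isoperimetric inequality (\ref{isoine}). First I would fix $h>0$ and a generic level $t$ with $0<t<\esssup|u|$, and on the annulus $\{t<|u|\le t+h\}$ write
\begin{equation*}
\int_{t<|u|\le t+h} H(\nabla u)\,dx = \int_{t<|u|\le t+h} 1\cdot H(\nabla u)\,dx \le \left(\int_{t<|u|\le t+h}dx\right)^{1/2}\left(\int_{t<|u|\le t+h}H^2(\nabla u)\,dx\right)^{1/2}.
\end{equation*}
Dividing by $h$, recognizing $\tfrac1h(\mu(t)-\mu(t+h))$ as the normalized measure of the annulus, and letting $h\to 0^+$ should give, for a.e.\ $t$, the differential inequality
\begin{equation*}
-\frac{d}{dt}\int_{|u|>t}H(\nabla u)\,dx \le \left(-\mu'(t)\right)^{1/2}\left(-\frac{d}{dt}\int_{|u|>t}H^2(\nabla u)\,dx\right)^{1/2}.
\end{equation*}

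Second, I would rewrite the left-hand side using the material of \S2.3: since $u\in H_0^1(\Omega)$ one has $\int|\nabla u|_H=\int H(\nabla u)$ on each super-level set, so by (\ref{derfle}) the left-hand side equals $P_H(\{|u|>t\};\Omega)$ for a.e.\ $t$. The generalized isoperimetric inequality (\ref{isoine}) then gives $P_H(\{|u|>t\})\ge n\kappa_n^{1/n}\mu(t)^{1-1/n}$, hence
\begin{equation*}
n\kappa_n^{1/n}\mu(t)^{1-1/n}\le\left(-\mu'(t)\right)^{1/2}\left(-\frac{d}{dt}\int_{|u|>t}H^2(\nabla u)\,dx\right)^{1/2}.
\end{equation*}
Squaring both sides and dividing by $n^2\kappa_n^{2/n}\mu(t)^{2-2/n}$ yields exactly (\ref{thlem1}).

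The step I expect to be the main obstacle is the limit $h\to 0^+$: one must use that the monotone functions $t\mapsto\int_{|u|>t}H(\nabla u)$ and $t\mapsto\int_{|u|>t}H^2(\nabla u)$, as well as $\mu$, are differentiable for a.e.\ $t$, and that the difference quotients of the two integral terms converge to those derivatives on a common full-measure set of levels while the Cauchy--Schwarz inequality is preserved in the limit. This is the standard but slightly delicate rearrangement-theoretic point; it also relies on $\{|u|>t\}$ having finite measure for $t>0$ (so that $\nabla u\in L^1$ there and (\ref{fleris})--(\ref{derfle}) apply at a.e.\ level), together with the two-sided bound (\ref{betabs}) on $H$. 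Everything after the differential inequality is purely algebraic.
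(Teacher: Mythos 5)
Your proposal is correct and follows exactly the paper's argument: Cauchy--Schwarz on the annulus $\{t<|u|\le t+h\}$, passage to the limit $h\to0^+$, identification of $-\frac{d}{dt}\int_{|u|>t}H(\nabla u)$ with $P_H(\{|u|>t\};\Omega)$ via (\ref{derfle}), the isoperimetric inequality (\ref{isoine}), and the final squaring. Your extra remark on the a.e.\ differentiability of the monotone functions involved is a point the paper leaves implicit but is handled in the standard way you describe.
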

\begin{lem}
\label{harlit}
\begin{equation}
\int_E|f| \leq \int_0^{|E|}f^*(s) \ ds
\end{equation}
for any measurable set $E$.
\end{lem}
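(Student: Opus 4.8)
The plan is to reduce both sides of the inequality to integrals of the distribution function by means of Cavalieri's principle. First I would use the layer--cake representation $|f(x)|=\int_0^\infty \chi_{\{|f|>t\}}(x)\,dt$, integrate it over $E$, and apply Tonelli's theorem to obtain $\int_E|f|\,dx=\int_0^\infty |E\cap\{|f|>t\}|\,dt$. Since $E\cap\{|f|>t\}$ is a subset both of $E$ and of $\{|f|>t\}$, we trivially have $|E\cap\{|f|>t\}|\le\min\{|E|,\mu(t)\}$, where $\mu$ is the distribution function of $f$ defined in Section 2.1. Hence $\int_E|f|\,dx\le\int_0^\infty\min\{|E|,\mu(t)\}\,dt$.

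Next I would compute the right--hand side in the same way. The key point is that $f^*$ is equimeasurable with $|f|$, i.e. $|\{s>0:f^*(s)>t\}|=\mu(t)$; this follows directly from the definition $f^*(s)=\sup\{t>0:\mu(t)>s\}$ together with the monotonicity of $\mu$. Since $f^*$ is non--increasing, its super--level set $\{s>0:f^*(s)>t\}$ is (up to a null set) the interval $[0,\mu(t))$, so that $|\{s\in(0,|E|):f^*(s)>t\}|=\min\{|E|,\mu(t)\}$. Applying Cavalieri's principle once more, $\int_0^{|E|}f^*(s)\,ds=\int_0^\infty\min\{|E|,\mu(t)\}\,dt$.

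Combining the two estimates yields $\int_E|f|\,dx\le\int_0^\infty\min\{|E|,\mu(t)\}\,dt=\int_0^{|E|}f^*(s)\,ds$, which is the assertion. The only slightly delicate ingredient is the equimeasurability of $f^*$ with $|f|$ and the attendant description of the super--level sets of $f^*$ as intervals; this is classical (\emph{see} \tI), and once it is granted, the rest is just Tonelli's theorem and the elementary bound $|E\cap\{|f|>t\}|\le\min\{|E|,\mu(t)\}$. Note that neither the gauge $H$ nor any convexity plays a role here: this is the usual Hardy--Littlewood inequality, invoked later only as an auxiliary tool.
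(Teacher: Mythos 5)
Your proof is correct. Note that the paper does not actually prove this lemma at all: it simply records it as ``a special case of a theorem by Hardy and Littlewood'' and cites \hlp, Theorem 378. So your layer--cake argument is a genuine, self-contained alternative to a bare citation, and it is the standard modern way to prove the Hardy--Littlewood inequality: write both sides via Cavalieri's principle, bound $|E\cap\{|f|>t\}|$ by $\min\{|E|,\mu(t)\}$, and identify $\int_0^{|E|}f^*(s)\,ds$ with $\int_0^\infty\min\{|E|,\mu(t)\}\,dt$ using equimeasurability. The only point worth being slightly more careful about is the identity $\{s>0: f^*(s)>t\}=[0,\mu(t))$: monotonicity of $\mu$ alone gives the inclusion $\{f^*>t\}\subseteq[0,\mu(t))$, while the reverse inclusion uses the right--continuity of $\mu$ (which holds because $\{|f|>t\}=\bigcup_{t'>t}\{|f|>t'\}$). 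Since you flag this as the classical equimeasurability property and the rest is just Tonelli, the argument is complete and buys the reader an elementary proof in place of an external reference.
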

This Lemma is a special case of a theorem by Hardy and Littlewood (\emph{see} \hlp, Theorem 378).
\begin{lem}
\label{gengro}
If $\varphi$ is bounded and 
\begin{equation}
\varphi(t)\leq\int_t^{+\infty}K(s)\varphi(s)\ ds + \psi (t) 
\end{equation}
for a.e. $t>0$, then
\begin{equation}
\varphi(t)\leq\int_t^{+\infty}\exp\left( \int_t^sK(r) \ dr\right) (-d\psi(s)) 
\end{equation}
for a.e. $t>0$. Here $K$ is any nonnegative integrable function, $\psi$ has bounded variation and vanishes at $+\infty$.
\end{lem}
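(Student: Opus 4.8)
The plan is to turn the integral inequality into a linear first-order differential inequality for the tail integral of $K\varphi$ and then to integrate it against the natural integrating factor. Since $\varphi$ is bounded and $K$ is nonnegative and integrable, $K\varphi\in L^1(0,+\infty)$, so
\[
\Phi(t):=\int_t^{+\infty}K(s)\varphi(s)\,ds
\]
is well defined, absolutely continuous on $[0,+\infty)$, satisfies $\Phi'(t)=-K(t)\varphi(t)$ for a.e.\ $t$, and tends to $0$ as $t\to+\infty$. Also, being of bounded variation and vanishing at $+\infty$, $\psi$ is bounded. Put $R(t):=\int_0^t K(r)\,dr$, which is bounded because $K\in L^1$. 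The hypothesis, which reads $\varphi(t)\le\Phi(t)+\psi(t)$ for a.e.\ $t>0$, gives
\[
-\Phi'(t)=K(t)\varphi(t)\le K(t)\bigl(\Phi(t)+\psi(t)\bigr),
\]
that is, $\frac{d}{dt}\bigl(e^{R(t)}\Phi(t)\bigr)=e^{R(t)}\bigl(\Phi'(t)+K(t)\Phi(t)\bigr)\ge-e^{R(t)}K(t)\psi(t)$ for a.e.\ $t$.

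Next I would integrate this inequality on $(t,+\infty)$. The function $e^{R}\Phi$ is absolutely continuous, $e^{R(s)}$ stays bounded and $\Phi(s)\to0$, so $e^{R(s)}\Phi(s)\to0$ as $s\to+\infty$, whence
\[
-e^{R(t)}\Phi(t)\ge-\int_t^{+\infty}e^{R(s)}K(s)\psi(s)\,ds.
\]
Multiplying by $e^{-R(t)}$ and using $e^{R(s)-R(t)}=\exp\!\bigl(\int_t^s K(r)\,dr\bigr)$,
\[
\Phi(t)\le\int_t^{+\infty}\exp\!\Bigl(\int_t^s K(r)\,dr\Bigr)K(s)\psi(s)\,ds.
\]
Since $\varphi(t)\le\Phi(t)+\psi(t)$ for a.e.\ $t$, the conclusion follows once we recognize that
\[
\psi(t)+\int_t^{+\infty}\exp\!\Bigl(\int_t^s K(r)\,dr\Bigr)K(s)\psi(s)\,ds=\int_t^{+\infty}\exp\!\Bigl(\int_t^s K(r)\,dr\Bigr)\,(-d\psi(s)).
\]

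This last identity is a Stieltjes integration by parts: with $G(s):=\exp\!\bigl(\int_t^s K(r)\,dr\bigr)$ one has $G(t)=1$, $G$ bounded, and $G'(s)=K(s)G(s)$ in the absolutely continuous sense, so
\[
\int_t^{+\infty}G(s)\,(-d\psi(s))=\bigl[-G(s)\psi(s)\bigr]_{s=t}^{s=+\infty}+\int_t^{+\infty}\psi(s)\,G'(s)\,ds=\psi(t)+\int_t^{+\infty}K(s)G(s)\psi(s)\,ds,
\]
the boundary term at $+\infty$ vanishing because $\psi(+\infty)=0$ and $G$ is bounded. Combining this with the estimate for $\Phi(t)$ yields the asserted bound for a.e.\ $t>0$. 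The differential-inequality step is routine once $\Phi$ is identified as absolutely continuous; the point requiring care is the integration by parts and the control of the boundary term at $+\infty$, since $\psi$ is only of bounded variation (so that $d\psi$ is a signed measure rather than $\psi'\,ds$) — here one invokes the Lebesgue--Stieltjes integration-by-parts formula together with the integrability of $K$ and the boundedness of $\psi$. This is essentially the argument in \tI and \tII.
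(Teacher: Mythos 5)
Your proof is correct. Note that the paper itself gives no proof of this lemma at all --- it simply records it as "a generalization of Gronwall's lemma" (it is taken from Talenti's work) --- so there is no argument of the author's to compare against; yours is the standard one: identify the tail integral $\Phi(t)=\int_t^{+\infty}K\varphi\,ds$ as absolutely continuous with $\Phi'=-K\varphi$, convert the hypothesis into the differential inequality $\frac{d}{ds}\bigl(e^{R(s)}\Phi(s)\bigr)\ge -e^{R(s)}K(s)\psi(s)$, integrate to $+\infty$ using $\Phi\to 0$ and the boundedness of $e^{R}$ (which needs $K\in L^1$), and then absorb the extra $\psi(t)$ via Lebesgue--Stieltjes integration by parts. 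The only points deserving a word of care are exactly the ones you flag: $\psi$ is bounded because it is BV and vanishes at $+\infty$, the boundary term at $+\infty$ dies for the same reason, and the identity $\psi(t)+\int_t^{+\infty}G\,K\psi\,ds=\int_t^{+\infty}G\,(-d\psi)$ is unambiguous at the endpoint $t$ except at the (at most countably many) discontinuities of $\psi$, which is harmless since the conclusion is only asserted for a.e.\ $t$.
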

Lemma \ref{gengro} is a generalization of Gronwall's lemma.
\vspace{11pt}

\section{\fontsize{10}{10}\selectfont \bf 3 - MAIN RESULT}
In this section we discuss the main result of the paper. It consists in showing that a solution to (\ref{aniell}) can be compared in term of a solution to (\ref{symell}), where the function $\tilde{b}$ is known as a pseudo rearrangement of $B(x)$. 
It can be defined as
\begin{equation}
\label{tildeb}
\tilde{b}\left(\left(\frac{s}{\kappa_n}\right)^\frac{1}{n}\right)=\left(\frac{d}{ds} \int_{|u|>u^*(s)}B^2(x) \right)^\frac{1}{2}, 
\end{equation}
We refer to \atII and \tII for further details.
\begin{thm}
\label{comthe}
Let $u \in H_0^1(\Omega)$ be a solution to the problem
\begin{equation}
\label{genpro}
\begin{sistema}
 -\text{\rm div} (a(x,u ,\nabla u))+b(x,\nabla u)=f \quad\text{in}\ \Omega\\
 u=0 \qquad\qquad\qquad\qquad\qquad\qquad\text{on}\ \partial\Omega
\end{sistema}
\end{equation}
where $a(x, \eta ,\xi)\equiv \{a_i(x,\eta,\xi)\}_{i=1,...,n}$ are Carath\'eodory functions satisfying
\begin{equation}
\label{ellhyp}
\langle a(x, \eta, \xi),\xi \rangle \geq (H(\xi))^2 \quad \text{a.e.} \quad  x\in\Omega, \quad  \eta\in\mathbb{R}, \quad  \xi\in\mathbb{R}^n.
\end{equation}
and $b(x,\xi)$ is such that:
\begin{equation}
\label{subhyp}
|b(x,\xi)|\leq B(x) H(\xi),
\end{equation}
where $B \in L^k(\Omega)$, with $k>n$. 
We assume further that $f \in L^\frac{2n}{n+2}(\Omega)$ if $n \geq 3$; $f \in L^p(\Omega)$, $p>1$, if $n=2$;\\
$H: \mathbb{R}^n \to [0,\infty[$ is a convex function satisfying \text{\rm (\ref{hompro})-(\ref{betabs})}.\\
Then
\begin{align}
\label{u*lqv*}
u^\star & \leq v\\
\label{ihqdul}
\int_{\Omega}H^q(\nabla u) & \leq\int_{\Omega^\star}H^q(\nabla v)
\end{align}
with $ 0 < q \leq 2 $, and 
\begin{equation}
\label{v(x)the}
v(x)=\int_{H_0(x)}^{\left(\frac{| \Omega |}{\kappa_n}\right)^{1/n}} \frac{1}{ t^{n-1}}dt \int_0^t \exp\left(\int_t^r\tilde{b}(r')dr'\right) f^*(\kappa_nr^n)r^{n-1}dr.
\end{equation}
where $\tilde{b}$ is defined as in \text{\rm (\ref{tildeb})}.
\end{thm}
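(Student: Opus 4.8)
\medskip
\noindent\textit{Proof (sketch).} The plan is to run a Talenti-type level-set comparison in the convex setting, with an extra Gronwall step to absorb the lower-order term $b$. First I would fix $t\in(0,\esssup|u|)$ and $h>0$ and test the weak form of \eqref{genpro} with the admissible function $\varphi_h:=\min\{(|u|-t)^+,h\}\,\sgn(u)\in H^1_0(\Omega)$. Since $\nabla\varphi_h=\chi_{\{t<|u|<t+h\}}\nabla u$ a.e.\ and $|\varphi_h|\le h\,\chi_{\{|u|>t\}}$, the ellipticity \eqref{ellhyp}, the bound \eqref{subhyp}, the Cauchy--Schwarz inequality and the monotonicity of $t\mapsto\int_{|u|>t}(\langle a,\nabla u\rangle-H^2(\nabla u))\,dx$ yield, after dividing by $h$ and letting $h\to0^+$,
\[
\eta(t):=-\frac{d}{dt}\int_{|u|>t}H^2(\nabla u)\,dx\ \le\ \int_{|u|>t}|f|\,dx+\int_{|u|>t}B(x)\,H(\nabla u)\,dx ,
\]
and the first term on the right is controlled by Lemma \ref{harlit} as $\int_{|u|>t}|f|\le\int_0^{\mu(t)}f^*(\sigma)\,d\sigma$.

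For the $b$-term I would use Cauchy--Schwarz on thin level-set layers, getting $-\frac{d}{dt}\int_{|u|>t}BH(\nabla u)\le\big(-\frac{d}{dt}\int_{|u|>t}B^2\big)^{1/2}\eta(t)^{1/2}$, and identify, by the chain rule and the definition \eqref{tildeb}, $-\frac{d}{dt}\int_{|u|>t}B^2=\tilde b\big((\mu(t)/\kappa_n)^{1/n}\big)^2(-\mu'(t))$ --- this is exactly the role of the pseudo-rearrangement. Passing to the variable $s=\mu(t)$ (so $t=u^*(s)$, $-\mu'(u^*(s))=-1/(u^*)'(s)$) and combining with the isoperimetric estimate of Lemma \ref{mu1H2>}, namely $(-\mu'(t))\eta(t)\ge n^2\kappa_n^{2/n}\mu(t)^{2-2/n}$, I would arrive, with $\tilde W(s):=\int_0^s f^*(\sigma)\,d\sigma+\int_{|u|>u^*(s)}B(x)H(\nabla u)\,dx$, at the two differential inequalities
\[
-(u^*)'(s)\le\frac{\tilde W(s)}{n^2\kappa_n^{2/n}\,s^{2-2/n}},\qquad
\tilde W'(s)\le f^*(s)+\frac{\tilde b\big((s/\kappa_n)^{1/n}\big)}{n\,\kappa_n^{1/n}\,s^{1-1/n}}\,\tilde W(s),
\]
the second following from Cauchy--Schwarz together with $\eta(u^*(s))\le\tilde W(s)$ and the first inequality (here one uses Schwarz's inequality and the homogeneity and convexity of $H$).

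Since $\tilde W(0)=0$ and $\tilde W$ is bounded, the second inequality is a linear Gronwall inequality; applying Lemma \ref{gengro} (after the change $s\mapsto|\Omega|-s$, which brings it to the form stated there, with kernel $K(s)=\tilde b((s/\kappa_n)^{1/n})/(n\kappa_n^{1/n}s^{1-1/n})$) gives $\tilde W(s)\le\int_0^s\exp\big(\int_\sigma^s K(\xi)\,d\xi\big)f^*(\sigma)\,d\sigma$. Inserting this in the first inequality and integrating from $s$ to $|\Omega|$ (with $u^*(|\Omega|)=0$), then reverting to the radial variable via $s=\kappa_n\rho^n$ and $\sigma=\kappa_n r^n$ --- which turns $\int_\sigma^s K$ into $\int_r^\rho\tilde b(r')\,dr'$ and $d\sigma/(n^2\kappa_n^{2/n}\sigma^{2-2/n})$ into $d\rho/(n\kappa_n\rho^{n-1})$ --- reproduces the expression \eqref{v(x)the} at $\rho=H_0(x)$, i.e.\ $u^\star(x)=u^*(\kappa_n H_0(x)^n)\le v(x)$, which is \eqref{u*lqv*}. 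For the gradient estimate \eqref{ihqdul} with $0<q\le2$, I would write $\int_\Omega H^q(\nabla u)=\int_0^{\esssup|u|}\big(-\frac{d}{dt}\int_{|u|>t}H^q(\nabla u)\big)dt$, use Hölder on layers, $-\frac{d}{dt}\int_{|u|>t}H^q(\nabla u)\le\eta(t)^{q/2}(-\mu'(t))^{1-q/2}$, and change to the variable $s$ to get $\int_\Omega H^q(\nabla u)\le\int_0^{|\Omega|}\big(\eta(u^*(s))(-(u^*)'(s))\big)^{q/2}ds\le\int_0^{|\Omega|}\big(\tilde W(s)^2/(n^2\kappa_n^{2/n}s^{2-2/n})\big)^{q/2}ds$; since for the radial $v$ all these inequalities become equalities with $\tilde W$ replaced by its Gronwall bound $\tilde W_v\ge\tilde W$, the same computation gives $\int_{\Omega^\star}H^q(\nabla v)=\int_0^{|\Omega|}\big(\tilde W_v(s)^2/(n^2\kappa_n^{2/n}s^{2-2/n})\big)^{q/2}ds$ and \eqref{ihqdul} follows.

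The main obstacle is twofold. On the technical side, all the level-set differentiations and the passages to the limit in the layer Cauchy--Schwarz inequalities must be justified rigorously --- absolute continuity of $\mu$, the coarea formula \eqref{derfle}, the generalized isoperimetric inequality \eqref{isoine} --- for which the machinery of \aftl\ and \tII\ is used, together with the integrability hypotheses on $f$ (ensuring $u^*$ bounded, $v$ in \eqref{v(x)the} finite, and $\psi$ in Lemma \ref{gengro} of bounded variation and vanishing at infinity). The more delicate point is making the lower-order term land precisely on the symmetrized problem: one has to check that the Gronwall kernel $K$ is integrable on $(0,|\Omega|)$, which --- because of the singular weight $s^{1/n-1}$ --- fails under mere square-integrability of $B$ and is exactly guaranteed by $B\in L^k(\Omega)$ with $k>n$ (forcing enough decay of $\tilde b$), and that the exponential factor produced by Gronwall coincides with the one in the definition of $v$.
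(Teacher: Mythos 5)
Your proposal is correct and follows essentially the same route as the paper: the same truncated test function, the layer Cauchy--Schwarz estimates combined with the generalized isoperimetric inequality (Lemma \ref{mu1H2>}), the pseudo-rearrangement identity (Lemma \ref{lechtb}), the Hardy--Littlewood bound (Lemma \ref{harlit}), and the generalized Gronwall lemma (Lemma \ref{gengro}), followed by the same H\"older-on-layers argument for the gradient estimate. The only difference is cosmetic: the paper applies Gronwall directly to $\varphi(t)=-\tfrac{d}{dt}\int_{|u|>t}H^2(\nabla u)$ in the level variable $t$, whereas you apply it to the accumulated quantity $\tilde W$ in the measure variable $s$; the two formulations are equivalent under the substitution $s=\mu(t)$ and lead to the same bound on $-(u^*)'$ and hence to \eqref{u*lqv*} and \eqref{ihqdul}.
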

\noindent
\textbf{Remark 1.} The function in (\ref{v(x)the}) is convexly symmetric, in the sense that $v(x)=v^\star(x)$. Indeed the function
\begin{equation}
v^*(s)=\int_s^{| \Omega|} \frac{1}{n^2\kappa_n^{2/n}} t^{\frac{2}{n}-2}dt \int_0^t \exp\left(\int_{\left(\frac{r}{\kappa_n}\right)^{1/n}}^{\left(\frac{t}{\kappa_n}\right)^{1/n}}\tilde{b}(r')dr'\right) f^*(r)dr
\end{equation}
is decresing and $v(x)=v^*(\kappa_n(H_0(x))^n)$. We observe that $v(x)$ is a solution in $H^1_0(\Omega^\star)$ to the problem
\begin{equation}
\label{cosypr}
\begin{sistema}
 -\text{div} (H(\nabla v)\nabla H(\nabla v))-\tilde{b}(H_0(x))\langle \nabla H_0(x),\nabla H( \nabla v) \rangle H(\nabla v)=f^\star \  \text{in} \ \Omega^\star\\
 v=0 \qquad\qquad\qquad\qquad\qquad\qquad\qquad\qquad\qquad\qquad\qquad\qquad\qquad  \text{on} \ \partial\Omega^\star.
\end{sistema}
\end{equation}
In fact, if we define $\rho=H_0(x)$ and we look for a solution such that $v(\rho)=v(H_0(x))$, we obtain
\begin{align}
\nabla v&=v'(\rho)\nabla H_0(x),\\
\label{subhdv}
H(\nabla v)&=-v'(\rho)H(\nabla H_0(x))=-v'(\rho),\\
\nabla H(\nabla v)&=\nabla H(v'(\rho)\nabla H_0(x))=\nabla H(\nabla H_0(x))=\frac{x}{H_0(x)}.
\end{align}
A direct computation gives
\begin{equation}
\label{symprovro}
\begin{split}
-\text{div}(H(\nabla v)\nabla H(\nabla v))- \tilde{b}(H_0(x)) & \langle \nabla H_0(x),  \nabla H(\nabla v)\rangle H(\nabla (v))\\
&=-v''(\rho)-\frac{n-1}{\rho}v'(\rho)+\tilde{b}(H_0(x))v'(\rho).
\end{split}
\end{equation}
Using (\ref{v(x)the}), we can write:
\begin{equation}
\label{comhdv}
v(\rho)=\int_\rho^{\left(\frac{| \Omega |}{\kappa_n}\right)^{1/n}} \frac{1}{ t^{n-1}}dt \int_0^t \exp\left(\int_\rho^tg(r')dr'\right) f^*(\kappa_nr^n)r^{n-1}dr
\end{equation}
and we have:
\begin{equation}
\label{compuv}
-v''(\rho)-\frac{n-1}{\rho}v'(\rho)+\tilde{b}(H_0(x))v'(\rho)=f^\star(\rho).
\end{equation}
Collecting (\ref{comhdv}) and (\ref{compuv}) we obtain that the function in (\ref{v(x)the}) solves (\ref{cosypr}).
\\
\textbf{Remark 2.} We can compute $\int_{\Omega^\star}H^q(\nabla v)$. By (\ref{subhdv}) we have
\begin{equation}
[H(\nabla v(x))]^q=[v'(\rho)]^q=\left[-\frac{1}{\rho^{n-1}}\int_0^\rho \exp\left(\int^\rho_r\tilde{b}(r')dr'\right) f^*(\kappa_n r^n)r^{n-1}dr\right]^q
\end{equation}
where $\rho = H_0(x)$. An integration by the substitution $s=\kappa_n r^n$ gives
\begin{equation}
[H(\nabla v(x))]^q=\left[-\frac{1}{n\kappa_n\rho^{n-1}}\int_0^{\kappa_n\rho^n} \exp\left(\int^\rho_{\left(\frac{s}{\kappa_n}\right)^{1/n}}\tilde{b}(r')dr'\right) f^*(s)ds\right]^q,
\end{equation}
therefore, by an integration on $\Omega^\star$, we have
\begin{equation}
\begin{split}
\int_{\Omega^\star} & [H(\nabla v(x))]^q\\
& = \int_0^{|\Omega|}\left[-\frac{1}{n\kappa_n\rho^{n-1}}\int_0^{\kappa_n\rho^n} \exp\left(\int^\rho_{\left(\frac{s}{\kappa_n}\right)^{1/n}}\tilde{b}(r')dr'\right)f^*(s)ds\right]^q d\rho.
\end{split}
\end{equation}
Hence, by the sustitution $\tau=\kappa_n\rho^n$, we have
\begin{equation}
\begin{split}
\int_{\Omega^\star} & [H(\nabla v(x))]^q\\
&=\int_0^{|\Omega|}\left[-\frac{1}{n\kappa_n^{1/n}}\tau^{\frac{1}{n}-1}\int_0^\tau \exp\left(\int^{\left(\frac{\tau}{\kappa_n}\right)^{1/n}}_{\left(\frac{s}{\kappa_n}\right)^{1/n}}\tilde{b}(r')dr'\right) f^*(s)ds\right]^q d\tau.
\end{split}
\end{equation}
\begin{thm}
\label{thecon}
Let $u \in H_0^1(\Omega)$ be a solution to problem \text{\rm (\ref{genpro})} under the assumption \text{\rm (\ref{ellhyp})}. Furthermore we suppose that \text{\rm (\ref{subhyp})} holds with
\begin{equation}
||B||_{L^\infty(\Omega)}=\beta\leq \infty;
\end{equation}
$f \in L^\frac{2n}{n+2}(\Omega)$ if $n \geq 3$; $f \in L^p(\Omega)$, $p>1$, if $n=2$; $H: \mathbb{R}^n \to [0,\infty[$ is a convex function satisfying \text{\rm (\ref{hompro})-(\ref{betabs})}.\\
Then \text{\rm (\ref{u*lqv*})} and \text{\rm (\ref{ihqdul})} holds with
\begin{equation}
\label{vbetac}
v(x)=\int_{H_0(x)}^{\left(\frac{| \Omega |}{\kappa_n}\right)^{1/n}} \frac{1}{ t^{n-1}}dt \int_0^t e^{\beta(r-t)} f^*(\kappa_nr^n)r^{n-1}dr.
\end{equation}
\end{thm}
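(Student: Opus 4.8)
The plan is to obtain Theorem~\ref{thecon} as the particular case of Theorem~\ref{comthe} in which the bound on the lower order term is constant, $\|B\|_{L^\infty(\Omega)}=\beta$; this hypothesis is stronger than the combination of (\ref{subhyp}) with $B\in L^k(\Omega)$, $k>n$, so Theorem~\ref{comthe} is available for the same $u$ and $f$. The one elementary input needed is that the pseudo rearrangement $\tilde b$ attached to $u$ by (\ref{tildeb}) satisfies $\tilde b\bigl((s/\kappa_n)^{1/n}\bigr)\le\beta$ for a.e.\ $s$: since $\tilde b\bigl((s/\kappa_n)^{1/n}\bigr)^2=\tfrac{d}{ds}\int_{|u|>u^*(s)}B^2(x)\,dx$ is (up to the change of variable $s=\kappa_n r^n$) the pseudo rearrangement of $B^2$, and the pseudo rearrangement is $L^p$-contractive for every $p$ --- in particular $p=\infty$ --- (see \atII, \tII; alternatively apply Lemma~\ref{harlit} to $B^2$ on the set $\{|u|>u^*(s)\}$, whose measure never exceeds $s$), we get $\tilde b\bigl((s/\kappa_n)^{1/n}\bigr)^2\le\|B^2\|_{L^\infty}=\beta^2$.

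The route I would take is simply to repeat the proof of Theorem~\ref{comthe} with the constant $\beta$ in place of $\tilde b$ everywhere. In that argument $B$ enters only once: after testing (\ref{genpro}) with a truncation of $u$, using the ellipticity (\ref{ellhyp}), the subordination (\ref{subhyp}), the Schwarz inequality on each level shell and Lemma~\ref{harlit}, the term coming from $b(x,\nabla u)$ is controlled by $\bigl(\tfrac{d}{ds}\int_{|u|>u^*(s)}B^2\bigr)^{1/2}=\tilde b\bigl((s/\kappa_n)^{1/n}\bigr)$, which by the previous paragraph may be replaced by $\beta$. The resulting differential inequality for $u^*$, combined with Lemma~\ref{mu1H2>}, is then integrated by means of the generalized Gronwall Lemma~\ref{gengro}, now applied with the \emph{constant} kernel $\beta$; undoing the changes of variables back to the radial variable $H_0(x)$, its solution is exactly the function (\ref{vbetac}). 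Estimates (\ref{u*lqv*}) and (\ref{ihqdul}) then follow verbatim as in Theorem~\ref{comthe}, and $\int_{\Omega^\star}H^q(\nabla v)$ can be written explicitly just as in Remark~2 by setting $\tilde b\equiv\beta$ there.

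An equivalent, more economical presentation is a direct comparison: since $\tilde b\le\beta$ and, in the representation of the comparison function furnished by Lemma~\ref{gengro}, $\tilde b$ occurs only inside exponential weights $\exp\!\bigl(\int_t^s\tilde b\bigr)$ with $s\ge t$ --- hence nondecreasing in the kernel --- while all remaining factors ($f^*$ and powers of the radial variable) are nonnegative, the function $v$ produced by Theorem~\ref{comthe} is pointwise $\le$ the function (\ref{vbetac}), and likewise $\int_{\Omega^\star}H^q(\nabla v)$ only grows; chaining with $u^\star\le v$ and $\int_\Omega H^q(\nabla u)\le\int_{\Omega^\star}H^q(\nabla v)$ from Theorem~\ref{comthe} yields the claim. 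The only point requiring attention in either route is the orientation of this monotonicity --- one must be sure that replacing $\tilde b$ by the larger constant $\beta$ \emph{weakens} the estimate rather than strengthening it --- and that is precisely what the sign configuration in Lemma~\ref{gengro} ($s\ge t$ under the exponential, $-d\psi\ge0$) guarantees; no ingredient beyond those used for Theorem~\ref{comthe} (the isoperimetric inequality (\ref{isoine}), the coarea identities, Lemmas~\ref{mu1H2>}--\ref{gengro}) is required.
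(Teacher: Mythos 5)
Your proposal is correct and coincides with the paper's own (one-line) argument, which simply reruns the proof of Theorem \ref{comthe} with $B$ taken constant; your two additions --- the verification that the pseudo rearrangement satisfies $\tilde b\le\beta$, and the alternative monotone-comparison route through the Gronwall kernel --- are both sound and make the reduction rigorous. One caveat is purely notational: the exponents in (\ref{v(x)the}) and (\ref{vbetac}) as printed have their integration limits reversed relative to the derivation in Section 4 (compare $v^*$ in Remark 1 and (\ref{u*<=v*}), where the exponent is nonnegative), so your monotonicity claim is correct for the intended weight $e^{\beta(t-r)}$ with $r\le t$, whereas for the literal $e^{\beta(r-t)}$ the inequality between the two comparison functions would reverse.
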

\noindent
\textbf{Remark 3.} The function $v(x)$ in (\ref{vbetac}) is a solution in $H_0^1(\Omega^\star)$ to the problem
\begin{equation}
\begin{sistema}
 -\text{\rm div} (H(\nabla v)\nabla H(\nabla v))-\beta\langle \nabla H_0(x),\nabla H( \nabla v) \rangle H(\nabla v)=f^\star \  \text{in} \ \Omega^\star\\
 v=0 \qquad\qquad\qquad\qquad\qquad\qquad\qquad\qquad\qquad\qquad\qquad\quad \text{on} \ \partial\Omega^\star.
\end{sistema}
\end{equation}
The proof of Theorem \ref{thecon} is similar to that of Theorem \ref{comthe} and it can be obtained from it considering the function $B(x)$ as a constant.
\vspace{11pt}

\section{\fontsize{10}{10}\selectfont \bf 4 - PROOF OF THEOREM \ref{comthe}}
Let us start by proving a preliminary result about the function $\tilde{b}$ (\emph{see} \tII).
\begin{lem}
\label{lechtb}
If $\tilde{b}$ is defined by \text{\rm (\ref{tildeb})}, then 
\begin{equation}
\label{chatib}
\left(-\frac{d}{dt} \int_{|u|>t}B^2(x) \right)^\frac{1}{2}=\sqrt{-\mu ' (t)} \ \tilde{b}\left(\left(\frac{\mu(t)}{\kappa_n}\right)^\frac{1}{n}\right)
\end{equation}
and
\begin{equation}
\label{thlebt}
-\frac{d}{dt}\int_{|u|>t}B(x)H(\nabla u)\leq \left(-\frac{d}{dt}\int_0^{\left(\frac{\mu (t)}{\kappa_n}\right)^\frac{1}{n}}\tilde{b}(r)dr\right)\left(-\frac{d}{dt}\int_{|u|>t}H^2(\nabla u)\right)
\end{equation}
for almost every $t \in [0,\esssup_\Omega |u|] $.
\end{lem}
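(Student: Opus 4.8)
The plan is to establish the two assertions in succession, with (\ref{thlebt}) obtained from (\ref{chatib}) together with Lemma~\ref{mu1H2>}.

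First I would prove~(\ref{chatib}) by the change of variable $s=\mu(t)$, i.e.\ $t=u^*(s)$, which is legitimate for a.e.\ $t\in(0,\esssup_\Omega|u|)$ --- namely those $t$ at which $\mu$ is differentiable with $\mu'(t)<0$ and $|\{|u|=t\}|=0$. Write $G(t):=\int_{|u|>t}B^2(x)\,dx$, a nonincreasing function; the quantity differentiated in~(\ref{tildeb}) is then $\frac{d}{ds}G(u^*(s))=G'(u^*(s))\,(u^*)'(s)$. Since $u^*$ is the generalized inverse of~$\mu$ one has $(u^*)'(\mu(t))=1/\mu'(t)$ for a.e.\ such~$t$, so that
\[
\frac{d}{ds}\int_{|u|>u^*(s)}B^2(x)\,\Big|_{\,s=\mu(t)}=\frac{1}{-\mu'(t)}\left(-\frac{d}{dt}\int_{|u|>t}B^2(x)\right),
\]
the two minus signs (from $G$ and $\mu$ both being decreasing) combining to leave a nonnegative quantity, consistently with the square root in~(\ref{tildeb}). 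Taking square roots and recalling that $(s/\kappa_n)^{1/n}=(\mu(t)/\kappa_n)^{1/n}$ when $s=\mu(t)$ gives~(\ref{chatib}).

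Next, for~(\ref{thlebt}), I would apply Cauchy--Schwarz on the thin level set $\{t<|u|\le t+h\}$ ($h>0$),
\[
\frac1h\int_{t<|u|\le t+h}B(x)H(\nabla u)\le\left(\frac1h\int_{t<|u|\le t+h}B^2(x)\right)^{\!1/2}\left(\frac1h\int_{t<|u|\le t+h}H^2(\nabla u)\right)^{\!1/2},
\]
and let $h\to0^+$ to obtain
\[
-\frac{d}{dt}\int_{|u|>t}B(x)H(\nabla u)\le\left(-\frac{d}{dt}\int_{|u|>t}B^2(x)\right)^{\!1/2}\left(-\frac{d}{dt}\int_{|u|>t}H^2(\nabla u)\right)^{\!1/2}.
\]
Substituting~(\ref{chatib}) for the first factor on the right turns this bound into $\sqrt{-\mu'(t)}\,\tilde{b}\bigl((\mu(t)/\kappa_n)^{1/n}\bigr)\bigl(-\frac{d}{dt}\int_{|u|>t}H^2(\nabla u)\bigr)^{1/2}$. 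On the other hand, a direct chain-rule computation gives
\[
-\frac{d}{dt}\int_0^{(\mu(t)/\kappa_n)^{1/n}}\tilde{b}(r)\,dr=\tilde{b}\!\left(\left(\frac{\mu(t)}{\kappa_n}\right)^{\!1/n}\right)\frac{-\mu'(t)}{n\kappa_n^{1/n}\mu(t)^{1-1/n}}.
\]
Comparing the last two displays, (\ref{thlebt}) reduces to the inequality $n\kappa_n^{1/n}\mu(t)^{1-1/n}\le\sqrt{-\mu'(t)}\bigl(-\frac{d}{dt}\int_{|u|>t}H^2(\nabla u)\bigr)^{1/2}$, which is precisely the (un-squared) estimate established inside the proof of Lemma~\ref{mu1H2>}. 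Inserting it supplies the missing factor --- which, again by Lemma~\ref{mu1H2>}, is $\ge1$ --- and closes the argument; the degenerate cases in which $\tilde{b}$ or one of the derivatives vanishes are immediate.

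I expect the main obstacle to be~(\ref{chatib}) --- more precisely, the rigorous justification at a.e.\ $t$ of the inverse-function identity for the one-dimensional rearrangement and of the well-posedness of $\tilde{b}$ (this is where the integrability of~$B$ enters), a point treated in \atII\ and \tII. Once~(\ref{chatib}) is in hand, (\ref{thlebt}) follows quickly by combining Cauchy--Schwarz with Lemma~\ref{mu1H2>}.
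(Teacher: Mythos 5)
Your proposal is correct and follows essentially the same route as the paper: the chain rule linking $\frac{d}{ds}$ along $s=\mu(t)$ to $\frac{d}{dt}$ gives (\ref{chatib}), and then Cauchy--Schwarz on the level set $\{t<|u|\le t+h\}$ combined with the isoperimetric estimate inside Lemma \ref{mu1H2>} gives (\ref{thlebt}). The only cosmetic differences are that you phrase the change of variables through $(u^*)'(\mu(t))=1/\mu'(t)$ where the paper writes $p(t)=q(\mu(t))$ directly, and that you divide down to the unsquared form of Lemma \ref{mu1H2>} where the paper multiplies by the factor $\ge 1$.
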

\begin{proof}
Let $p(t)$ and $q(s)$  be the integrals of $B(x)$ over $\{|u|>t\}$ and $\{|u|>u^*(s)\}$ respectively, hence $p'(t)=q'(\mu(t))\mu ' (t)$ for almost every $t\in[0,\esssup_\Omega u]$. So equality (\ref{chatib}) is proved.\\
By H\"older inequality, we have
\begin{equation}
-\frac{d}{dt}\int_{|u|>t}B(x)H(\nabla u)\leq \left(-\frac{d}{dt}\int_{|u|>t}B(x)\right)^\frac{1}{2}\left(-\frac{d}{dt}\int_{|u|>t}H^2(\nabla u)\right)^\frac{1}{2},
\end{equation}
by (\ref{chatib}) we obtain
\begin{equation}
-\frac{d}{dt}\int_{|u|>t}B(x)H(\nabla u)\leq\sqrt{-\mu ' (t)} \ \tilde{b}\left(\left(\frac{\mu(t)}{\kappa_n}\right)^\frac{1}{n}\right)\left(-\frac{d}{dt}\int_{|u|>t}H^2(\nabla u)\right)^\frac{1}{2},
\end{equation}
hence, by Lemma \ref{mu1H2>} ,
\begin{equation}
\begin{split}
-\frac{d}{dt}\int_{|u|>t} & B(x)H(\nabla u)\\
& \leq -\mu'(t)\frac{\mu(t)^{\frac{1}{n}-1}}{n\kappa_n^{1/n}} \ \tilde{b}\left(\left(\frac{\mu(t)}{\kappa_n}\right)^\frac{1}{n}\right)\left(-\frac{d}{dt}\int_{|u|>t}H^2(\nabla u)\right),
\end{split}
\end{equation}
that is equal to the right-hand side of (\ref{thlebt}).
\end{proof}
\noindent
\textbf{Proof of Theorem \ref{comthe}.}
Suppose $u$ is a weak solution of problem (\ref{genpro}), then
\begin{equation}
\label{weasol}
\int_\Omega\langle a(x,u,\nabla u),\nabla \varphi\rangle +\int_\Omega b(x,\nabla u)\varphi=\int_\Omega f\varphi,\qquad  \forall \varphi\in H_0^1(\Omega).
\end{equation}
For $h>0$, $t>0$, let $\varphi$ be the following test function
\begin{equation}
\varphi_h(x)=
\begin{sistema}
  h,\qquad\quad \text{if} \  |u|>t+h\\
 |u|-t, \quad \text{if} \ t<|u|\leq t+h\\
 0, \qquad\quad \text{if} \ |u|\leq t,
\end{sistema}
\end{equation}
then
\begin{equation}
\nabla _i\varphi_h(x)=
\begin{sistema}
0,\qquad \text{if} \  |u|>t+h\\
\nabla _iu, \quad \text{if} \  t<|u|\leq t+h\\
0, \qquad \text{if} \ |u|\leq t.
\end{sistema}
\end{equation}
Inserting this test function in (\ref{weasol}), we have
\begin{equation}
\begin{split}
\int_{t<|u| \leq t+h} & \langle  a(x,u,\nabla u) ,\nabla  u\rangle + \int_{|u|>t+h} b(x,\nabla u)h\\
&=\int_{|u|>t+h} fh +\int_{t<|u| \leq t+h}(f-b(x,\nabla u))(|u|-t)\sgn u.
\end{split}
\end{equation}
The last term is smaller than $ \int_{t<|u| \leq t+h}(f- b(x,\nabla u))(|u|-t)$ and, by hypothesis (\ref{ellhyp}) and (\ref{subhyp}), we have
\begin{equation}
\label{symwea}
\begin{split}
\int_{t<|u|\leq t+h}H^2(\nabla u) -h \int_{|u|>t+h} B(x) & H(\nabla u)\leq \int_{|u|>t+h} fh \\
& +\int_{t<|u|\leq t+h}(f- b(x,\nabla u))(|u|-t).
\end{split}
\end{equation}
Dividing each term by $h$, as $h\rightarrow 0^+$, (\ref{symwea}) becomes
\begin{equation}
-\frac{d}{dt}\int_{|u|>t}H^2(\nabla u) - \int_{|u|>t} B(x)H(\nabla u)\leq\int_{|u|>t} f,
\end{equation}
and, by Lemma \ref{harlit},
\begin{equation}
\label{weagro}
-\frac{d}{dt}\int_{|u|>t}H^2(\nabla u) - \int_{|u|>t} B(x)H(\nabla u)\leq\int_0^{\mu(t)} f^*(s)ds.
\end{equation}
Now, we can write
\begin{equation}
\int_{|u|>t}B(x)H(\nabla u)= \int_t^{+\infty}\left(-\frac{d}{ds}\int_{|u|>s}B(x) H(\nabla u)\right)ds
\end{equation}
and hence, by Lemma \ref{lechtb}, we have
\begin{equation}
\label{flehol}
\begin{split}
\int_{|u|>t} & B(x)H(\nabla u) \\
& \leq\int_t^{+\infty}\left(-\frac{d}{dt}\int_0^{\left(\frac{\mu(t)}{\kappa_n}\right)^\frac{1}{n}}\tilde{b}(r)dr\right)\left(-\frac{d}{ds}\int_{|u|>s} H^2(\nabla u)\right) ds.
\end{split}
\end{equation}
Inserting (\ref{flehol}) in (\ref{weagro}) we obtain
\begin{equation}
\begin{split}
&- \frac{d}{dt}\int_{|u|>t}  H^2(\nabla u) \\
& \leq\int_t^{+\infty}\left(-\frac{d}{dt}\int_0^{\left(\frac{\mu(t)}{\kappa_n}\right)^\frac{1}{n}}\tilde{b}(r)dr\right)  \left(-\frac{d}{ds}\int_{|u|>s} H^2(\nabla u)\right) ds +\int_0^{\mu(t)} f^*(s)ds.
\end{split}
\end{equation}
Now we can use Lemma \ref{gengro} with $\varphi (t)=-\frac{d}{dt}\int_{|u|>t}H^2(\nabla u)$. We have
\begin{equation}
-\frac{d}{dt}\int_{|u|>t}H^2(\nabla u) \leq\int_t^{+\infty}\exp\left(\int_t^s-\frac{d}{dr}\int_0^{\left(\frac{\mu(r)}{\kappa_n}\right)^\frac{1}{n}}\tilde{b}(r')dr'\right)\left[-d\psi( s)ds\right],
\end{equation}
where $\psi(s)=\int_0^{\mu(s)} f^*(\xi)d\xi$.\\
Using the substitution $\rho=\mu(s)$ and $\sigma=\mu(r)$, we obtain
\begin{equation}
\label{wd2f13}
-\frac{d}{dt}\int_{|u|>t}H^2(\nabla u) \leq\int_0^{\mu(t)}\exp\left(\int_{\left(\frac{\sigma}{\kappa_n}\right)^\frac{1}{n}}^{\left(\frac{\mu(t)}{\kappa_n}\right)^\frac{1}{n}}\tilde{b}(\rho)d\rho\right)f^*(\sigma)d\sigma.
\end{equation}
Inequality (\ref{wd2f13}) and Lemma \ref{mu1H2>} give 
\begin{equation}
1\leq\frac{1}{n^2\kappa_n^{2/n}}\mu(t)^{\frac{2}{n}-2}(-\mu'(t))\int_0^{\mu(t)}\exp\left(\int_{\left(\frac{\sigma}{\kappa_n}\right)^\frac{1}{n}}^{\left(\frac{\mu(t)}{\kappa_n}\right)^\frac{1}{n}}\tilde{b}(\rho)d\rho\right)f^*(\sigma)d\sigma.
\end{equation}
for a.e. $t \in [0, \esssup |u|]$, then integration of both sides with respect to $t$ over the interval $[0,u^*(s)]$ yields
\begin{equation}
\label{u*<=v*}
u^*(s)\leq\int_s^{|\Omega|}dt\frac{1}{n^2\kappa_n^{2/n}}t^{\frac{2}{n}-2}\int_0^t\exp\left(\int_{\left(\frac{\sigma}{\kappa_n}\right)^\frac{1}{n}}^{\left(\frac{t}{\kappa_n}\right)^\frac{1}{n}}\tilde{b}(\rho)d\rho\right)f^*(\sigma)d\sigma
\end{equation}
From formula (\ref{v(x)the}), we learn that $v^*(s)$ is the right-hand side of (\ref{u*<=v*}), so (\ref{u*lqv*}) is satisfied.\\
In order to prove (\ref{ihqdul}), we observe that  H\"older inequality gives
\begin{equation}
\frac{1}{h}\int_{t<|u|\leq t+h}H^q(\nabla u) \leq \left(\frac{1}{h} \int_{t<|u|\leq t+h} \ dx \right)^{1-\frac{q}{2}}\left(\frac{1}{h}\int_{t<|u|\leq t+h}H^2(\nabla u)\right)^\frac{q}{2}
\end{equation}
and hence, for $t\rightarrow 0^+$,
\begin{equation}
\label{limhol}
-\frac{d}{dt}\int_{|u|>t}H^q(\nabla u) \leq \left(-\mu '(t) \right)^{1-\frac{q}{2}}\left(-\frac{d}{dt}\int_{|u|> t}H^2(\nabla u)\right)^\frac{q}{2},
\end{equation}
provided that $ 0 < q \leq 2$.
Lemma \ref{mu1H2>}  gives
\begin{equation}
\label{le1HDu}
\left[-\frac{d}{dt}\int_{|u|>t}H^2(\nabla u)\right]^\frac{1}{2}
\leq\frac{1}{n\kappa_n^{1/n}}\mu(t)^{\frac{1}{n}-1}(-\mu ' (t))^\frac{1}{2}\left[-\frac{d}{dt}\int_{|u|>t}H^2(\nabla u)\right],
\end{equation}
hence by inequality (\ref{wd2f13})
\begin{equation}
\label{le1405}
\begin{split}
&\left[-\frac{d}{dt}\int_{|u|>t} H^2(\nabla u)\right]^\frac{1}{2}\\
& \leq\frac{1}{n\kappa_n^{1/n}}\mu(t)^{\frac{1}{n}-1}(-\mu ' (t))^\frac{1}{2}\int_0^{\mu(t)}\exp\left(\int_{\left(\frac{\sigma}{\kappa_n}\right)^\frac{1}{n}}^{\left(\frac{\mu(t)}{\kappa_n}\right)^\frac{1}{n}}\tilde{b}(\rho)d\rho\right)f^*(\sigma)d\sigma.
\end{split}
\end{equation}
Coupling (\ref{le1405}) with (\ref{limhol})
\begin{equation}
\begin{split}
-\frac{d}{dt}  & \int_{|u|>t}  H^q(\nabla u)\\
& \leq (-\mu ' (t))^{1-q/2}\left[\left(-\frac{d}{dt}\int_{|u|>t}H^2(\nabla u)\right)^\frac{1}{2}\right]^q\\
&\leq (-\mu ' (t))^{1-q/2}\\
&\quad\left[ \frac{1}{n\kappa_n^{(1/n)}}\mu(t)^{\frac{1}{n}-1}(-\mu ' (t))^\frac{1}{2}\int_0^{\mu(t)}\exp\left(\int_{\left(\frac{\sigma}{\kappa_n}\right)^\frac{1}{n}}^{\left(\frac{\mu(t)}{\kappa_n}\right)^\frac{1}{n}}\tilde{b}(\rho)d\rho\right)f^*(\sigma)d\sigma \right]^q.
\end{split}
\end{equation}
Consequently
\begin{equation}
\begin{split}
\int_\Omega & H^q(\nabla u)\\
&\leq \int_0^{|\Omega |}-\mu '(t)\\
&\qquad\quad\left[ \frac{1}{n\kappa_n^{(1/n)}}\mu(t)^{\frac{1}{n}-1}\int_0^{\mu(t)}\exp\left(\int_{\left(\frac{\sigma}{\kappa_n}\right)^\frac{1}{n}}^{\left(\frac{\mu(t)}{\kappa_n}\right)^\frac{1}{n}}\tilde{b}(\rho)d\rho\right)f^*(\sigma)d\sigma \right]^q dt,
\end{split}
\end{equation}
and hence, by the substitution $\tau=\mu(t)$,
\begin{equation}
\begin{split}
\int_\Omega  H^q( &\nabla u) \\
& \leq \int_0^{|\Omega |}\left[ \frac{1}{n\kappa_n^{(1/n)}}\tau^{\frac{1}{n}-1}\int_0^{\tau}\exp\left(\int_{\left(\frac{\sigma}{\kappa_n}\right)^\frac{1}{n}}^{\left(\frac{\tau}{\kappa_n}\right)^\frac{1}{n}}\tilde{b}(\rho)d\rho\right)f^*(\sigma)d\sigma \right]^q  d\tau\\
&\qquad\qquad\qquad\qquad\qquad\qquad\qquad\qquad\qquad\qquad\qquad =\int_{\Omega^\star}H^q(\nabla v ),
\end{split}
\end{equation}
so the theorem is proved.\\

\vspace{11pt}

\section{\fontsize{10}{10}\selectfont \bf 5 - REFERENCES}
\indent Amar M. and Bellettini G. (1994) {\em A notion of total variation depending on a metric with discontinuous coefficients}.  Annales de l'Institut Henry Poincar\'e. Analyse Nonlineaire. {\bf 11}, pp. 91-133.\\
\indent Alvino A., Buonocore P. and Trombetti G. (1990) {\em On Dirichlet problem for second order elliptic equations}. Nonlinear Analysis: Theory, Methods \& Applications. {\bf 14}, no. 7, pp. 559-570. \\
\indent Alvino A., Ferone V., Trombetti G. and Lions P.L. (1997) {\em Convex symmetrization and applications}. Annales de l'Institut Henry Poincar\'e (C) Non Linear Analysis. {\bf 14}, no. 2, pp. 275-293 \\
\indent Alvino A. and Trombetti G. (1979) {\em Equazioni ellittiche con termini di ordine inferiore e riordinamenti}.  Atti della Accademia Nazionale dei Lincei. Rendiconti della Classe di Scienze Fisiche, Matematiche e Naturali (8). {\bf 66}, no. 3, pp. 194-200. \\
\indent Alvino A. and Trombetti G. (1978) {\em Sulle migliori costanti di maggiorazione per una classe di equazioni ellittiche}. Ricerche di Matematica. {\bf27}, no. 2, pp. 413-428\\
\indent Alvino A., Trombetti G. and Lions P.L. (1990) {\em Comparison results for elliptic and parabolic equations via Schwarz symmetrization}. Annales de l'Institut Henry Poincar\'e. Analyse Nonlineaire. {\bf 7}, no. 2, pp. 37-65 \\
\indent Betta M.F., Ferone V. and Mercaldo A. (1994) {\em Regularity for solutions of nonlinear elliptic equations}. Bulletin des Sciences Mathematiques. {\bf 118}, no. 6, pp. 539-567\\
\indent Betta M.F. and Mercaldo A. (1991) {\em Existence and regularity results for a nonlinear elliptic equation}.  Rendiconti di Matematica e delle sue Applicazioni. {\bf 11}, no. 4, pp. 737-759\\
\indent Della Pietra F. and Gavitone N. {\em Sharp estimates and existence for anisotropic elliptic problems with general growth in the gradient}. arXiv:1402.3086\\
\indent Della Pietra F. and Gavitone N. (2013) {\em Anisotropic elliptic equations with general growth in the gradient and Hardy-type potentials}. Journal of Differential Equations. {\bf 255}, pp. 3788-3810\\
\indent Ferone A., Ferone V. and Volpicelli R. (1997) {\em Moser-type inequalities for solutions of linear elliptic equations with lower-order terms}. Differential Integral Equations. {\bf 10}, no. 6, pp. 1031-1048.\\
\indent Ferone V. and Posteraro M. (1992) {\em Symmetrization results for elliptic equations with lower-order terms}. Atti del Seminario Matematico e Fisico dell'Università di Modena. {\bf 40}, pp. 47-61.\\ 
\indent Hardy G.H., Littlewood J.E. and P\'olya G. (1964) {\em Inequalities}. Cambridge University Press, Cambridge, United Kingdom. \\
\indent Rockafellar R.T. (1970) {\em Convex Analysis}. Princeton University Press, Princeton, United States of America.\\
\indent Talenti G. (1976) {\em Elliptic equations and rearrangements}. Annali della Scuola Normale Superiore di Pisa. Classe di Scienze(4). {\bf 3}, no. 4, pp. 697-718.\\
\indent Talenti G. (1985) {\em Linear Elliptic P.D.E.'s: Level Sets, Rearrangements and a priori Estimates of Solutions}. Bollettino dell'Unione Matematica Italiana B (6), {\bf 4-B}, pp.917-949.\\

\end{document}